\documentclass[a4paper,11pt]{amsart}
\usepackage[
  top=3cm,
  bottom=3cm,
  left=3cm,
  right=3cm,
  headheight=14pt,
  headsep=1cm,
  footskip=1cm
]{geometry}
\usepackage{amsmath, epsfig, verbatim}
\usepackage{amsfonts}
\usepackage{amsthm}
\usepackage{mathtools}
\usepackage{amsmath}
\usepackage{amssymb} 
\usepackage{graphics,graphicx}
\usepackage{epstopdf}
\usepackage{listings}
\usepackage{xcolor}
\usepackage{tikz}
\usepackage{subcaption}
\usepackage{pgfplots}
\usepackage{caption}
\usepackage{booktabs}
\usepackage{tabularx}
\usepackage{hyperref} 
\usepackage{doi}
\usepackage{enumitem}
\usepackage[numbers,sort&compress]{natbib}
\usetikzlibrary{calc}
\allowdisplaybreaks
\definecolor{codegray}{rgb}{0.95,0.95,0.95}
\definecolor{pykeyword}{rgb}{0.13,0.13,1}
\definecolor{pystring}{rgb}{0.58,0,0.82}

\lstdefinestyle{pythonstyle}{
    backgroundcolor=\color{codegray},
    language=Python,
    basicstyle=\ttfamily\small,
    keywordstyle=\color{pykeyword}\bfseries,
    stringstyle=\color{pystring},
    commentstyle=\color{gray},
    showstringspaces=false,
    numbers=left,
    numberstyle=\tiny,
    frame=single,
    breaklines=true,
    tabsize=4,
}

\lstset{style=pythonstyle}

\numberwithin{equation}{section}
\thispagestyle{empty}
\theoremstyle{plain}
\newtheorem{theorem}{Theorem}

\newtheorem{lemma}[theorem]{Lemma}

\begin{document}
\title{Visibility polynomial of some graph classes} 
\author{Tonny K B}
\address{Tonny K B, Department of Mathematics, College of Engineering Trivandrum, Thiruvananthapuram, Kerala, India, 695016.}
\email{tonnykbd@cet.ac.in}
\author{Shikhi M}
\address{Shikhi M, Department of Mathematics, College of Engineering Trivandrum, Thiruvananthapuram, Kerala, India, 695016.}
\email{shikhim@cet.ac.in}

\begin{abstract}
Mutual visibility in graphs provides a framework for analyzing how vertices can observe one another along shortest paths free of internal obstructions. The visibility polynomial, which enumerates mutual-visibility sets of all orders, has emerged as a central invariant in this study, with both theoretical significance and practical relevance in areas such as surveillance, target tracking, and distributed coordination. While computing this polynomial is computationally demanding, with known complexity $O(n^32^n)$, explicit characterizations for specific graph families yield valuable structural insights. In this paper, we characterize the mutual-visibility sets of several fundamental graph classes and derive closed-form expressions for their associated visibility polynomials. These results deepen the understanding of visibility-based invariants and expand the toolkit available for studying visibility phenomena in networks.
\end{abstract}

\subjclass[2010]{05C30, 05C31, 05C76 }
\keywords{mutual-visibility set, visibility polynomial, $c_Q$-visible set}
\maketitle
\section{Introduction}
 Let $G(V,E)$ be a simple graph and let $X\subseteq V$.  Two vertices $u,v\in V$ are said to be $X$-visible \cite{Stefano} if there exists a shortest path $P$ from $u$ to $v$ such that the internal vertices of $P$ do not belong to $X$; that is, $V(P)\cap X \subseteq \{u, v\}$. A set $X$ is called a mutual-visibility set of $G$ if every pair of vertices in $X$ is $X$-visible. The maximum size of such a set in $G$ is referred to as the mutual-visibility number, denoted by $\mu(G)$.

The study of mutual visibility in graphs has attracted considerable interest due to its wide-ranging applications in both theory and practice. The earliest investigations can be traced back to Wu and Rosenfeld, who examined visibility-related problems in pebble graphs~\cite{Geo_convex_1}. A more formal graph-theoretic definition of mutual-visibility sets was later given by Di Stefano~\cite{Stefano}. Since then, the notion of mutual visibility has emerged as an effective framework for understanding how information, influence, or coordination propagate within networks subject to topological restrictions. This line of research has been explored extensively in the literature~\cite{MV_2, MV_5, MV_9}, and several extensions and variants of the concept have also been introduced~\cite{MV_10}.

A related concept is the general position set, introduced independently in \cite{GP_1, GP_3}. This notion refers to a subset of vertices in a graph with the property that no three distinct vertices lie on the same geodesic. In other words, within such a set, no vertex occurs on a shortest path between any two of the others. Building on this idea, researchers have further developed the framework by defining the general position polynomial of a graph, which was proposed and studied in \cite{GP_2}.

 Examining mutual-visibility sets of different orders provides valuable insights into how groups of agents can maintain simultaneous observation of one another—a feature that is essential for applications such as surveillance, target tracking, and distributed coordination. When visibility is restricted by obstacles, studying sets of various sizes becomes important for identifying how many agents can remain in line-of-sight and adapt their positions accordingly. A significant advancement in this area was made by B. Csilla et al., who introduced the visibility polynomial in~\cite{sandi}. This polynomial invariant encodes the distribution of mutual-visibility sets of all orders in a graph. By enumerating these sets, one gains a richer understanding of the structural properties and visibility patterns inherent in the network.

 In \cite{VP_1}, the present authors studied the visibility polynomial of the join of two graphs. Also, it is identified that the algorithm for computing the visibility polynomial of a graph has a time complexity of $O(n^32^n)$, making the problem computationally intensive for larger graphs. In \cite{VP_2}, the present authors investigate the visibility polynomial associated with the corona product of two graphs. As part of this investigation, the concept of set-separator with respect to two subsets of the vertex set of a graph is defined. Let $u, v \in V(G)$. A vertex $g \in V(G)\setminus\{u, v\}$ is said to be a shortest-separator with respect to $u$ and $v$ if every shortest $(u,v)$-path contains $g$. The collection of all shortest-separators is called the path-cut of $G$, denoted by $p_c(G)$. Let $A$ and $B$ be two disjoint subsets of $V(G)$. A vertex $g$ is said to be a set-separator with respect to $A$ and $B$ if $g$ is a shortest-separator for every $u \in A$ and $v \in B$.

 Let $Q \subseteq V(G)$. A subset $W$ of $\overline{Q} = V(G) \setminus Q$ is said to be a $c_Q$-visible set \cite{VP_2} if $W$ is $Q$-visible  and $\{u, w\}$ is $Q$-visible for all $u \in Q$ and $w \in W$. A $c_Q$-visible set is said to be maximal if it is not a proper subset of any larger $c_Q$-visible set. Furthermore, a $c_Q$-visible set $W$ is called an absolute $c_Q$-visible set of $G$ if $Q$ is also a mutual-visibility set of $G$. That is, $W$ is an absolute $c_Q$-visible set if $Q \cup W$ is $Q$-visible. A maximal absolute $c_Q$-visible set is denoted by $\Omega_Q(G)$. The collection of all maximal absolute $c_Q$-visible sets is denoted by $\Gamma_Q(G)$.

 In this paper, the mutual-visibility sets of some fundamental graph classes are characterized, and explicit formulae for the visibility polynomial associated with the graph classes are derived.
 
\section{Notations and preliminaries}
  $G(V,E)$ represents an undirected simple graph with vertex set $V(G)$ and edge set $E(G)$. Unless otherwise stated, all graphs in this paper are assumed to be connected, so that there is at least one path between each pair of vertices. We follow the standard graph-theoretic definitions and notation as presented in \cite{Harary}.

   A complete graph on $n$ vertices is a graph in which there is an edge between any pair of distinct vertices, and is denoted by $K_n$. A sequence of vertices $(u_0,u_1,u_2,\ldots,u_{n})$ is referred to as a $(u_0,u_n)$-path in a graph $G$ if $u_iu_{i+1}\in E(G)$, $\forall i\in\{0,1,\ldots,(n-1)\}$. A cycle (or circuit) in a graph $G$ is a path $(u_0,u_1,u_2,\ldots, u_n)$ together with an edge $u_0u_n$. If a graph $G$ on $n$ vertices itself is a path, it is denoted by $P_n$, and if the graph $G$ itself is a cycle, it is denoted by $C_n$. For $n\ge 4$, the wheel graph $W_n = C_{n-1} \vee K_1 $. The unique vertex $h$ in $K_1$ is called the hub, and the cycle $C_{n-1}$ is called the rim of $W_n$. The helm graph $H_n$ is obtained from $W_n$ by attaching a pendant edge to each vertex of the rim $C_{n-1}$. Equivalently, $H_n = (W_n \odot K_1) \setminus e_h$, where $e_h$ denotes the unique pendant edge incident with the hub $h$ of $W_n$. The friendship graph $F_n$ is the graph formed by $n$ 3-cycles having exactly one vertex in common, called the center. A shell graph $S_n$, where $n \geq 3$, is obtained from the cycle $C_n$ by adding $(n-3)$ chords incident with a common vertex, called the apex $c$.  
Equivalently, $S_n = P_{n-1} \vee K_1$, where $P_{n-1}$ is a path on $n-1$ vertices and $K_1=\{c\}$ is the apex. The bow graph $B_{m,n}$ is defined as the union of two shell graphs $S_m$ and $S_n$ sharing the same apex $c$. Equivalently, $B_{m,n} = K_1 \vee \bigl(P_{m-1} \cup P_{n-1}\bigr)$, where $K_1=\{c\}$ denotes the apex, and $P_{m-1}$ and $P_{n-1}$ are disjoint paths on $m-1$ and $n-1$ vertices, respectively.

  The distance $ d_G(u,v)$ between two vertices $ u $ and $ v $ in $G$ is the length of the shortest $ (u,v)$-path in $ G$. Such a shortest path is referred to as a geodesic from $u$ to $v$.  The maximum distance between any pair of vertices of $G$  is called the diameter of  $G$, denoted by $diam(G)$. 

 Let $G$ and $H$ be two graphs. Then the join, $G \vee H$ is the graph with the vertex set $V(G)\cup V(H)$ and the edge set $E(G) \cup E(H)\cup \lbrace uv:u \in V(G), v\in V(H) \rbrace$.  The corona of $G$ and $H$, denoted by $G \odot H$, is the graph obtained by taking one copy of $G$ and $|V(G)|$ copies of $H$, and for each vertex $v$ in $G$, joining $v$ to each vertex in the corresponding copy of $H$ associated with  $v$, denoted by $H_v$.
\section{Visibility polynomial of some graph classes}
\begin{lemma}\label{P4.lem1}
For $n\ge 8$, let $W_n=C_{n-1}\vee K_1$ be the wheel graph with hub $h$ and rim $C_{n-1}$. Let $B\subseteq V(C_{n-1})$ with $|B|\ge 2$. Then $\{h\}\cup B$ is a mutual-visibility set of $W_n$ if and only if $B$ contains only two vertices $u$ and $v$ with $d_{C_{n-1}}(u,v)\le 2$.
\end{lemma}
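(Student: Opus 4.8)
The plan is to work directly from the shortest-path structure of $W_n$, exploiting the fact that the hub $h$ belongs to every candidate set $X=\{h\}\cup B$. First I would record the distances: $d_{W_n}(h,c)=1$ for every rim vertex $c$, while for two rim vertices $u,v$ we have $d_{W_n}(u,v)=1$ if they are adjacent on $C_{n-1}$ and $d_{W_n}(u,v)=2$ otherwise (the path $u\,h\,v$ always realises distance $2$). Consequently every pair $\{h,c\}$ with $c\in B$ is $X$-visible automatically, since the edge $hc$ has no internal vertex, so the whole problem reduces to analysing pairs of rim vertices inside $B$. The decisive observation is that because $h\in X$, the length-$2$ path $u\,h\,v$ is \emph{always} internally blocked, so whenever $u,v$ are non-adjacent on the rim their $X$-visibility must come from an alternative geodesic lying on the rim.

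Next I would classify the rim pairs by their rim-distance. If $d_{C_{n-1}}(u,v)=1$ the pair is joined by an edge and is trivially $X$-visible. If $d_{C_{n-1}}(u,v)=2$, the only two geodesics in $W_n$ are $u\,h\,v$ and $u\,w\,v$, where $w$ is the unique rim vertex lying between $u$ and $v$ on the short arc (uniqueness and the absence of any further common neighbour are where $n-1\ge 7$ first matters); since the hub path is blocked, the pair is $X$-visible if and only if $w\notin B$. Finally, if $d_{C_{n-1}}(u,v)\ge 3$, then $u$ and $v$ share no common rim neighbour, so $u\,h\,v$ is the \emph{only} geodesic, and as it is blocked by $h$ the pair is never $X$-visible. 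This trichotomy turns the mutual-visibility condition into two purely combinatorial requirements on $B$: every two vertices of $B$ must have rim-distance at most $2$, and for each pair at rim-distance exactly $2$ the intermediate vertex must lie outside $B$.

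For the forward (``only if'') direction I would then establish the cycle-geometric fact that, on a cycle of length $n-1\ge 7$, any vertex set with pairwise distance at most $2$ is contained in three consecutive rim vertices, hence has at most three elements. This is the step where the hypothesis $n\ge 8$ is genuinely used and is the main obstacle: for shorter rims the claim fails (for instance $\{c_0,c_2,c_4\}$ in $C_6$ has all pairwise distances equal to $2$ without being consecutive), so I must argue that once $n-1\ge 7$ the distances cannot ``wrap around'' and the set is forced onto a single short arc. Granting this, the case $|B|=3$ is excluded immediately: three consecutive vertices $c_{p},c_{p+1},c_{p+2}$ contain the pair $c_{p},c_{p+2}$ at rim-distance $2$ whose intermediate vertex $c_{p+1}$ lies in $B$, violating the second requirement. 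Since $|B|\ge 2$, this leaves exactly $|B|=2$, say $B=\{u,v\}$, and the pairwise-distance constraint gives $d_{C_{n-1}}(u,v)\le 2$.

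For the reverse (``if'') direction a direct verification suffices: when $B=\{u,v\}$ with $d_{C_{n-1}}(u,v)\le 2$, the hub pairs $\{h,u\},\{h,v\}$ are visible via their edges, and the pair $\{u,v\}$ is visible either as an edge (if the rim-distance is $1$) or via the intermediate rim vertex $w\notin\{u,v\}=B$ (if the rim-distance is $2$), so $\{h\}\cup B$ is a mutual-visibility set. I expect the only nontrivial part of the whole argument to be the cycle-distance lemma and the careful bookkeeping of common neighbours, both of which rest on $n\ge 8$; the remaining steps are short distance computations.
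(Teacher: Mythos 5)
Your proposal is correct, and its skeleton matches the paper's: the same distance formula $d_{W_n}(u,v)=\min\{2,\,d_{C_{n-1}}(u,v)\}$, the same observation that $h\in X$ blocks every length-two hub geodesic so that rim pairs at rim-distance at least $3$ (whose unique geodesic is $(u,h,v)$) can never coexist in $B$, and an essentially identical reverse-direction verification. Where you genuinely diverge is in eliminating $|B|\ge 3$. The paper argues locally: it takes an arbitrary $b\in B$ with nearest $B$-neighbours $b^-,b^+$ at gaps $s_1,s_2\in\{1,2\}$, and uses $d_{C_{n-1}}(b^-,b^+)=\min\{s_1+s_2,\,(n-1)-(s_1+s_2)\}$ together with $(n-1)-(s_1+s_2)\ge 3$ (this is exactly where $n\ge 8$ enters) to force $s_1+s_2=2$, so that $(b^-,b,b^+)$ is a consecutive triple every one of whose geodesics is blocked by $b$ or $h$. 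You instead factor the step through a global structural lemma --- on $C_{n-1}$ with $n-1\ge 7$, any set of pairwise rim-distance at most $2$ lies within three consecutive vertices --- which bounds $|B|\le 3$, after which you kill $|B|=3$ with the same middle-vertex obstruction. Both routes use $n\ge 8$ for the same purpose (preventing wrap-around), and your $C_6$ example $\{c_0,c_2,c_4\}$ correctly certifies that the threshold is sharp; the paper's local bookkeeping avoids stating a separate lemma, while your version makes the exact visibility criterion for distance-$2$ pairs (visible iff the intermediate rim vertex $w\notin B$) and the precise role of the hypothesis more transparent. One caveat: you assert the containment lemma rather than prove it, but the argument you indicate does close it --- for a maximum-distance pair $a,b$ of the set, the intersection of their distance-$2$ balls is a single arc of three consecutive vertices whenever the cycle has length at least $7$ --- so this is a routine fill-in, not a gap.
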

\begin{proof}
For rim vertices $u$ and $v$,
\begin{equation}\label{P4.eq1}
d_{W_n}(u,v)=\min\{2,\,d_{C_{n-1}}(u,v)\},
\end{equation}
since $(u,h,v)$ is always a $(u,v)$-path of length two. Let $X=\{h\}\cup B$.

\smallskip
\noindent($\Rightarrow$) Suppose $X$ is a mutual-visibility set. If some $u,v\in B$ satisfy $d_{C_{n-1}}(u,v)\geq 3$, then by~\eqref{P4.eq1} the unique geodesic from $u$ to $v$ has length two and passes through $h\in X$. Hence $\{u,v\}$ is not $X$-visible, a contradiction. Therefore,
\begin{equation}\label{P4.eq2}
d_{C_{n-1}}(u,v) \leq 2 \quad \text{for all } u,v \in B.
\end{equation}

Suppose $|B|\geq 3$. Choose any $b \in B$, and let $b^+$ and $b^-$ denote the nearest vertices of $b$ in the clockwise and counterclockwise directions on $C_{n-1}$, respectively. Define $s_1 = d_{C_{n-1}}(b^-,b)$ and $s_2 = d_{C_{n-1}}(b,b^+)$. By~\eqref{P4.eq2}, we have $s_1,s_2 \in \{1,2\}$. The two $(b^-,b^+)$-paths on $C_{n-1}$ have lengths $s_1+s_2$ (which passes through $b$) and $(n-1)-(s_1+s_2)$. Thus,
\begin{equation}\label{P4.eq3}
d_{C_{n-1}}(b^-,b^+) = \min\{s_1+s_2,\,(n-1)-(s_1+s_2)\} \;\geq\; 2,
\end{equation}
since $2 \leq s_1+s_2 \leq 4$ and $(n-1)-(s_1+s_2)\geq 3$. Hence, by~\eqref{P4.eq2}, we must have $d_{C_{n-1}}(b^-,b^+)=2$. Therefore, from~\eqref{P4.eq3}, it follows that $s_1+s_2=2$, as $(n-1)-(s_1+s_2)\geq 3$. Consequently, the shortest $(b^-,b^+)$-path along the rim is $(b^-,b,b^+)$, and moreover $d_{W_n}(b^-,b^+)=2$. Thus every shortest $(b^-,b^+)$-path contains either $b$ or $h$, implying that $B$ is not $X$-visible, a contradiction. Therefore $B=\{u,v\}$ with $d_{C_{n-1}}(u,v)\leq 2$.

\smallskip
\noindent($\Leftarrow$) Conversely, let $B=\{u,v\}$ with $d_{C_{n-1}}(u,v)\leq 2$.  
The pairs $\{h,u\}$ and $\{h,v\}$ are $X$-visible via the edges $hu$ and $hv$. If $d_{C_{n-1}}(u,v)=1$, then the edge $uv$ is a geodesic from $u$ to $v$, with no internal vertices. If $d_{C_{n-1}}(u,v)=2$, then there exists a shortest $(u,v)$-path $(u,w,v)$ in $C_{n-1}$, whose unique internal vertex $w$ lies on the rim. Since $|B|=2$, we have $w\notin X$. Hence, in both cases $\{u,v\}$ is $X$-visible, and therefore $X$ is a mutual-visibility set. This completes the proof.
\end{proof}

\begin{theorem}
For $n\geq 8$, the visibility polynomial of the wheel graph $W_n$ is given by, $\mathcal{V}(W_n)=(1+x)^{n-1}+x+(n-1)x^2+2(n-1)x^3$.

\end{theorem}

\begin{proof}
Let $S \subseteq V(C_{n-1})$ and $u,v \in S$. If $u$ and $v$ are adjacent, then they are $S$-visible trivially. If they are not adjacent, then $d_{C_{n-1}}(u,v) \geq 2$. In this case, there exists a shortest $(u,v)$-path $P = (u,h,v)$ through the hub $h$ that avoids $S$ internally. Therefore, every subset of the rim $C_{n-1}$ is a mutual-visibility set of $W_n$. Hence, the corresponding contribution to $\mathcal{V}(W_n)$ is $(1+x)^{n-1}$.

Now, let $S = \{h\} \cup B$ with $B \subseteq V(C_{n-1})$. If $S$ is a mutual-visibility set, then by Lemma~\ref{P4.lem1}, we have $|B| \leq 2$. If $|B| = 0$, then $S = \{h\}$, contributing $x$. If $|B| = 1$, then for each rim vertex $u$, the set $\{h,u\}$ is a mutual-visibility set, contributing $(n-1)x^2$. If $|B| = 2$, say $B = \{u,v\}$, then by Lemma~\ref{P4.lem1}, $S$ is a mutual-visibility set if and only if $d_{C_{n-1}}(u,v) \leq 2$. In $C_{n-1}$, there are exactly $(n-1)$ adjacent pairs and $(n-1)$ pairs at distance $2$, yielding $2(n-1)$ such sets, contributing $2(n-1)x^3$ to $\mathcal{V}(W_n)$. This completes the proof.
\end{proof}

\begin{lemma}\label{P4.lem2}
Let $Q$ be a mutual-visibility set of the wheel graph $W_n$, $n \geq 8$, with hub $h$. If $h \notin Q$, then $\overline{Q}$ is the unique maximal absolute $c_Q$-visible set; hence, $Q$ is disjoint-visible.
\end{lemma}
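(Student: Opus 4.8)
The plan is to reduce everything to one observation: since $h\notin Q$, the hub is always available as the internal vertex of a length-two detour between any two rim vertices. Because $h\notin Q$ forces $Q\subseteq V(C_{n-1})$, I would first record that $Q$ is a rim subset, and then aim to prove the stronger statement that \emph{every} pair of vertices of $W_n$ is $Q$-visible. Once this is in hand, the membership of $\overline{Q}$ among the absolute $c_Q$-visible sets, together with its maximality and uniqueness, will follow almost formally from the definitions.

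First I would verify all-pairs $Q$-visibility by a short case split, using the distance formula \eqref{P4.eq1}, $d_{W_n}(u,v)=\min\{2,d_{C_{n-1}}(u,v)\}$. For two rim vertices $u,v$: if they are adjacent, the edge $uv$ is a geodesic with no internal vertex; if $d_{C_{n-1}}(u,v)\ge 2$, then $d_{W_n}(u,v)=2$ and $(u,h,v)$ is a geodesic whose only internal vertex is $h\notin Q$. For a rim vertex $u$ and the hub $h$, the edge $uh$ is itself a geodesic. Since these exhaust all pairs of $V(W_n)$, every pair is $Q$-visible.

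With this established, I would read off the $c_Q$-visibility of $\overline{Q}$ directly from the definition: the pairs inside $\overline{Q}$ are $Q$-visible (so $\overline{Q}$ is $Q$-visible) and the cross pairs in $Q\times\overline{Q}$ are $Q$-visible, so $\overline{Q}$ is $c_Q$-visible. Since $Q$ is assumed to be a mutual-visibility set, $\overline{Q}$ is in fact an absolute $c_Q$-visible set. For maximality and uniqueness I would argue set-theoretically: by definition every $c_Q$-visible set is contained in $\overline{Q}$, so any maximal absolute $c_Q$-visible set $W$ satisfies $W\subseteq\overline{Q}$, and maximality together with the $c_Q$-visibility of $\overline{Q}$ forces $W=\overline{Q}$. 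Hence $\Omega_Q(W_n)=\overline{Q}$ is the unique member of $\Gamma_Q(W_n)$. Finally, since $Q\cap\overline{Q}=\emptyset$ and $Q\cup\overline{Q}=V(W_n)$, this identification is exactly the statement that $Q$ is disjoint-visible.

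The content here is genuinely light, so the main obstacle is not a computation but a matter of care. I must make sure the hub-detour $(u,h,v)$ is an actual \emph{shortest} path rather than merely a walk of length two for each non-adjacent rim pair, which is precisely what \eqref{P4.eq1} guarantees, and I must match the two-part definition of $c_Q$-visibility (within-$W$ visibility \emph{and} cross visibility) instead of checking only one half. The uniqueness assertion is the only place that could conceal a gap, but it dissolves once one notes that $c_Q$-visible sets are by definition subsets of $\overline{Q}$, so the full complement, when it qualifies, automatically sits at the top of the containment order.
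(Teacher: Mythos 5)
Your proof is correct and takes essentially the same route as the paper's: both rest on the hub detour $(u,h,v)$, justified as a genuine geodesic by~\eqref{P4.eq1}, together with the definitional fact that every $c_Q$-visible set is a subset of $\overline{Q}$, so that $\overline{Q}$, once verified to be $c_Q$-visible, is automatically the unique maximal absolute one. Your only deviation is cosmetic — you prove the mildly stronger statement that \emph{all} pairs of $V(W_n)$ are $Q$-visible rather than checking just the pairs within $\overline{Q}$ and the cross pairs in $Q\times\overline{Q}$, which subsumes the paper's two cases.
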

\begin{proof}
Let $x,y \in \overline{Q}$. If $h \in \{x,y\}$ and, if $y$ lies on the rim, then the edge $hy$ is the shortest $(x,y)$-path, having no internal vertices and thus avoiding $Q$. If $h \notin \{x,y\}$, then $x$ and $y$ are both rim vertices. By the equation ~\eqref{P4.eq1}, 
\[
d_{W_n}(x,y) = \min\{\,2,\, d_{C_{n-1}}(x,y)\,\}.
\]
If $x$ and $y$ are adjacent on the rim, then the edge $xy$ is the unique geodesic between them, with no internal vertices. Otherwise, the path $(x,h,y)$ is a shortest $(x,y)$-path whose unique internal vertex is $h \notin Q$. Therefore, $\overline{Q}$ is $Q$-visible.

Now, let $x \in Q$ and $y \in \overline{Q}$. If $y = h$, then the edge $xh$ is a shortest path with no internal vertices. Otherwise, $x$ and $y$ are both rim vertices, and as established in the previous case, they are $Q$-visible.

Thus, $\overline{Q}$ is $c_Q$-visible and, since $Q$ is a mutual-visibility set, it follows that $\overline{Q}$ is the unique maximal absolute $c_Q$-visible set. Consequently, $Q$ is disjoint-visible.
\end{proof}
\begin{lemma}[{\cite[Lem. 1]{VP_2}}]\label{P2.lem1}
    Let $A \subseteq V(G)$. Then $A$ is a mutual-visibility set of $G$ if and only if it is a mutual-visibility set of $G \odot H$.
\end{lemma}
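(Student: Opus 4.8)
The plan is to reduce the statement to a single structural fact about the corona product: that attaching the copies $H_w$ to $G$ does not alter distances among vertices of $G$, nor introduce any new geodesics between them. First I would prove that for every pair $u,v \in V(G)$ one has $d_{G\odot H}(u,v)=d_G(u,v)$, and moreover that every $(u,v)$-geodesic in $G\odot H$ is contained entirely in $V(G)$. The key observation is that in $G\odot H$ each vertex $w\in V(G)$ is a cut vertex separating its copy $H_w$ from the remainder of the graph: the only edges leaving $H_w$ are those joining its vertices to $w$. Consequently any walk that enters $H_w$ must leave it through $w$ as well, so a detour through any copy $H_w$ strictly increases length. Hence no geodesic between two vertices of $G$ can use a vertex outside $V(G)$, which gives both the distance equality and the claim that the $(u,v)$-geodesics coincide in $G$ and in $G\odot H$.

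With this in hand, the equivalence is immediate. Fix $A\subseteq V(G)$ and an arbitrary pair $u,v\in A$. By the structural fact, the collection of $(u,v)$-geodesics is the same in $G$ and in $G\odot H$, and every such geodesic has all of its internal vertices in $V(G)$. Since $A\subseteq V(G)$, the internal vertices that could obstruct visibility are exactly the same in both graphs. Therefore the pair $\{u,v\}$ admits a geodesic avoiding $A$ internally in $G$ if and only if it admits one in $G\odot H$; that is, $\{u,v\}$ is $A$-visible in $G$ precisely when it is $A$-visible in $G\odot H$.

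Finally I would take the conjunction over all pairs from $A$. Since $A$ is a mutual-visibility set of a graph exactly when every pair of its vertices is $A$-visible there, and pairwise $A$-visibility is preserved in both directions by the previous paragraph, $A$ is a mutual-visibility set of $G$ if and only if it is a mutual-visibility set of $G\odot H$. The main obstacle is the structural distance-preservation step; once the cut-vertex argument establishes that geodesics between $G$-vertices never leave $V(G)$, the rest is a routine translation of the visibility condition. One point to handle with care is the boundary case where $u$ or $v$ is adjacent to vertices of $H_u$ or $H_v$: these neighbours lie outside $V(G)$ and cannot shorten a path between two vertices of $G$, so they play no role, but it is worth noting explicitly that a shortest $(u,v)$-path cannot begin by stepping into a pendant copy.
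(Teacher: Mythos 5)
Your proof is correct: the observation that each $w\in V(G)$ is a cut vertex separating its copy $H_w$ from the rest of $G\odot H$, so that no simple path between two vertices of $V(G)$ can enter any copy, does give $d_{G\odot H}(u,v)=d_G(u,v)$ with identical geodesic sets, from which the equivalence of $A$-visibility in $G$ and in $G\odot H$ follows at once. Note that the paper itself states this lemma as a citation from \cite[Lem.~1]{VP_2} and contains no proof of it, so there is nothing internal to compare against; your cut-vertex, distance-preservation argument is the standard and expected one for this result.
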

\begin{lemma}[{\cite[Lem. 2]{VP_2}}]\label{P2.lem2}
 Let $G$ and $H$ be two graphs. If $S \subseteq \cup_{w \in V(G)} V(H_w) $, then $S$ is a mutual-visibility set of $G \odot H$. 
\end{lemma}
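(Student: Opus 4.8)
The plan is to exploit the rigid structure of geodesics in the corona product. Since $S$ consists entirely of copy vertices and contains no vertex of the core $V(G)$, it suffices to prove that any two vertices of $S$ are joined by \emph{some} geodesic whose internal vertices all lie in $V(G)$; such a geodesic automatically avoids $S$, so every pair in $S$ is $S$-visible and $S$ is a mutual-visibility set.

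First I would record the distances in $G \odot H$. The key structural fact is that each copy $H_w$ is attached to the rest of the graph only through the vertex $w \in V(G)$, so any path leaving a copy must pass through $w$. From this I would argue that a geodesic between two core vertices stays entirely in the core, since detouring into a copy $H_{w''}$ and returning costs two extra edges. For two copy vertices $a \in V(H_w)$ and $b \in V(H_{w'})$ this yields
\begin{equation}
d_{G\odot H}(a,b)=
\begin{cases}
\min\{d_H(a,b),\,2\}, & \text{if } w=w',\\
d_G(w,w')+2, & \text{if } w\neq w'.
\end{cases}
\end{equation}

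Next I would verify $S$-visibility case by case. When $a,b$ lie in the same copy $H_w$ and are adjacent in $H$, the edge $ab$ is itself a geodesic with no internal vertices. When they lie in the same copy but are non-adjacent in $H$, the length-$2$ path $(a,w,b)$ is a geodesic whose only internal vertex is $w\in V(G)$, hence $w\notin S$; note that I route through $w$ even when $d_H(a,b)=2$, so that visibility is secured regardless of any $H$-internal geodesic. When $a$ and $b$ lie in distinct copies $H_w$ and $H_{w'}$, every geodesic has the form $(a,w,\dots,w',b)$, where the middle segment is a core geodesic from $w$ to $w'$; all of its internal vertices lie in $V(G)$ and therefore avoid $S$.

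The main point to nail down carefully is the claim that geodesics between copy vertices can always be routed through the core — more precisely, that no geodesic between two vertices of $S$ is \emph{forced} to use an internal copy vertex. This reduces to the observation above that shortest paths between core vertices never dip into a copy, together with the fact that the attachment vertex of each copy is a core vertex not belonging to $S$. Once this routing fact is secured, the case analysis is immediate and the result follows.
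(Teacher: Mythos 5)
Your proof is correct and complete: the corona distance formula you state is right, the path $(a,w,b)$ is indeed a geodesic whenever $a,b\in V(H_w)$ are non-adjacent in $H$ (including the subtle case $d_H(a,b)=2$, where you correctly avoid relying on an $H$-internal geodesic), and your routing claim for distinct copies holds because any simple path can enter or leave a copy $H_{w''}$ only through the single attachment vertex $w''$, so geodesics between copy vertices decompose as $(a,w,\dots,w',b)$ with all internal vertices in $V(G)\setminus S$. Note that this paper only imports the lemma from the cited source without reproducing its proof, so there is no in-paper argument to compare against; your argument is the standard one that the citation presumably contains, and it needs no repair.
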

\begin{lemma}[{\cite[Lem. 3]{VP_2}}]\label{P2.lem3}
 If $v \in V(G)$ and $\emptyset \subsetneq B\subseteq \cup_{w \in V(G) } V(H_w)$, then the set $\{ v\} \cup B$ is a mutual-visibility set of $G \odot H$ if and only if $B$ satisfies one of the following conditions.\\
    1. $B$ is a mutual-visibility set of $H$ with $diam_{H}(B) \leq 2$.\\
    2. $B \subseteq \cup_{w \in V(G)\setminus \{v\} } V(H_w)$ and $\{v\}$-visible subset of $G \odot H$.
\end{lemma}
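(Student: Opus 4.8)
The plan is to reduce everything to three distance facts in $G \odot H$ and then exploit a cut-vertex structure. First I would record the distances: for $a,b$ in the same copy $H_w$ one has $d_{G\odot H}(a,b)=\min\{d_H(a,b),2\}$, since $a-w-b$ is always an $a$-$b$ path of length two; for $a\in H_{w_1}$ and $b\in H_{w_2}$ with $w_1\neq w_2$ one has $d_{G\odot H}(a,b)=d_G(w_1,w_2)+2$; and for the apex $v\in V(G)$ and $b\in H_w$ one has $d_{G\odot H}(v,b)=1$ if $w=v$ and $d_G(v,w)+1$ otherwise. The decisive structural remark is that $v$ is a cut vertex separating $H_v$ from the rest of $G\odot H$: every path joining a vertex of $H_v$ to a vertex outside $H_v\cup\{v\}$ must pass through $v$.

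I would then prove the forward direction by a dichotomy forced by this cut-vertex remark. Suppose $X=\{v\}\cup B$ is a mutual-visibility set. If $B$ contained a vertex $a\in H_v$ and a vertex $b$ in some $H_w$ with $w\neq v$, then $v$ would lie on every $a$-$b$ path and hence on every geodesic, so $\{a,b\}$ would not be $X$-visible, a contradiction. Thus either $B\subseteq V(H_v)$ or $B\subseteq\cup_{w\neq v}V(H_w)$. In the second case, since $v$ is the only element of $X$ lying in $V(G)$, an $X$-avoiding geodesic between two vertices of $B$ is exactly a geodesic whose internal vertices miss $v$; hence $B$ is $\{v\}$-visible, giving condition~2. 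In the first case $B\subseteq V(H_v)$, I would argue that both requirements of condition~1 are necessary: if some $a,b\in B$ had $d_H(a,b)\geq 3$, then $d_{G\odot H}(a,b)=2$ with the only length-two path being $a-v-b$, blocked by $v\in X$; and if $B$ failed to be a mutual-visibility set of $H$, the offending pair (necessarily at $H$-distance two, by the diameter bound just obtained) would have all of its length-two detours---the within-copy geodesics and $a-v-b$---passing through $X$, again contradicting visibility.

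For the converse I would verify sufficiency of each condition directly. Under condition~1 with $B\subseteq V(H_v)$, an adjacent pair is visible through its edge, a pair at $H$-distance two is visible through a within-copy geodesic avoiding $B$ (which exists since $B$ is a mutual-visibility set of $H$ and has diameter at most two, so the $G\odot H$-distance is realized inside the copy), and each pair $\{v,b\}$ is visible through the edge $vb$. Under condition~2, pairs of $B$ lying in a common copy $H_w$ are visible through $w\notin X$, pairs lying in distinct copies $H_{w_1},H_{w_2}$ are visible along a geodesic whose only internal vertices are $w_1,w_2$ and $G$-vertices on a $v$-avoiding $w_1$-$w_2$ geodesic (supplied by $\{v\}$-visibility, and automatically disjoint from $B$ because these are vertices of $G$), and each pair $\{v,b\}$ is visible along any shortest $v$-to-$w$ path followed by the edge $wb$, whose internal vertices again lie in $V(G)$ and so miss $B$.

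The main obstacle I expect is the necessity half of condition~1 in the case $B\subseteq V(H_v)$: one must separate the two failure modes (large $H$-diameter versus failure of mutual visibility within $H$) and, in each, rule out every competing length-two path in $G\odot H$---in particular checking that the only common neighbours of two copy-vertices are either $v$ or a within-copy vertex, so that no stray short-cut through $G$ can rescue visibility. Once the cut-vertex remark and the within-copy distance formula are in place, the remaining verifications are routine.
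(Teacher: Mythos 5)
This lemma is imported verbatim from the authors' earlier paper \cite{VP_2} and is stated here without proof, so there is no in-paper argument to compare yours against; I can only assess your reconstruction on its own terms, and it is correct and complete. Your three distance formulas ($d_{G\odot H}(a,b)=\min\{d_H(a,b),2\}$ within a copy, $d_G(w_1,w_2)+2$ across copies, $d_G(v,w)+1$ from $v$ to $H_w$) are all right, and the cut-vertex observation that $v$ separates $H_v$ from the rest of $G\odot H$ correctly forces the dichotomy $B\subseteq V(H_v)$ or $B\subseteq\cup_{w\neq v}V(H_w)$, which is exactly the split the two conditions of the lemma encode. The delicate step you flagged --- that two vertices of $H_v$ have no common neighbour outside $V(H_v)\cup\{v\}$, so no shortcut through $G$ can rescue a pair at $H$-distance at least $3$ or a pair whose every common $H$-neighbour lies in $B$ --- is handled correctly, as is the sufficiency of condition~2, where the key point that internal vertices of cross-copy geodesics lie in $V(G)$ and hence miss $B$ is made explicit. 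One small reading issue: condition~1 as stated does not literally say $B\subseteq V(H_v)$, and you only verify its sufficiency under that restriction; this is harmless, since a $B$ lying in a single copy $H_w$ with $w\neq v$ is automatically $\{v\}$-visible and so falls under your condition-2 argument, but a sentence noting this would close the formal gap. Given how this lemma is applied later (in the helm theorem, where $H=K_1$ and condition~1 concerns the pendant vertex at $v$), your interpretation matches the intended use, and your argument is the natural one for corona products --- almost certainly the same route as the original proof in \cite{VP_2}.
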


\begin{lemma}[{\cite[Lem. 22]{VP_2}}]\label{P2.lem12}
Let $G$ be a graph and $\emptyset \neq Q \subsetneq V(G)$, and $S=Q \cup S_{\overline{Q}}$, where $\emptyset \neq S_{\overline{Q}} \subseteq \cup_{w \in \overline{Q} } V(H_w)$, is a mutual-visibility set of $G \odot H$. The contribution to the visibility polynomial of $G \odot H$ corresponding to mutual-visibility sets of the form $S$ is $\sum_{Q} p_Q(x)$, where the sum is taken over all proper mutual-visibility sets $Q$ of $G$ and 
$$p_Q(x) = \left\{
  \begin{array}{ll}
  \sum_{\Omega_Q(G)}\left((1+x)^{|\Omega_Q(G)||V(H)|}-1\right)x^{|Q|}& \text{if } Q \text{ is disjoint-visible} \\[10pt]
  \sum_{\emptyset \neq \mathcal{J} \subseteq \Gamma_Q(G)} (-1)^{|\mathcal{J}|+1} \left((1+x)^{\left| \cap_{W \in \mathcal{J}} W \right||V(H)|}-1\right)x^{|Q|} & \text{Otherwise}
  \end{array}
\right.$$
\end{lemma}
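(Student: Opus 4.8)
The plan is to first determine exactly which sets $S = Q \cup S_{\overline{Q}}$ are mutual-visibility sets of $G \odot H$, and then, for each fixed $Q$, to count such sets weighted by $x^{|S|}$. For the characterization I would exploit the distance structure of the corona: between two base vertices every geodesic stays inside $G$; a geodesic from a base vertex $u$ to a copy vertex $s \in V(H_w)$ is forced to leave through $w$, hence is a $G$-geodesic from $u$ to $w$ followed by the edge $ws$; and a geodesic between copy vertices $s \in V(H_w)$ and $s' \in V(H_{w'})$ with $w \neq w'$ must pass through $w$, a $G$-geodesic from $w$ to $w'$, and then $w'$, while two copy vertices of the same $H_w$ lie at distance at most two through $w \notin S$ and are therefore always $S$-visible. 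Writing $W = \{w \in \overline{Q} : V(H_w) \cap S_{\overline{Q}} \neq \emptyset\}$ for the support of $S_{\overline{Q}}$ and invoking Lemma~\ref{P2.lem1} for the $Q$--$Q$ pairs (cf. the single-base-vertex case of Lemma~\ref{P2.lem3}), these observations transfer every visibility requirement on $S$ to one on $G$: pairs inside $Q$ force $Q$ to be a mutual-visibility set of $G$; pairs $\{u,s\}$ with $u \in Q$ force $\{u,w\}$ to be $Q$-visible; and cross-copy pairs force $W$ to be $Q$-visible. Thus $S$ is a mutual-visibility set if and only if $Q$ is a mutual-visibility set of $G$ and $W$ is a $c_Q$-visible set, that is, since $Q$ is assumed mutual-visible, an absolute $c_Q$-visible set.

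Next I would record the elementary but crucial fact that $c_Q$-visibility is hereditary: any subset of a $c_Q$-visible set is again $c_Q$-visible, both defining conditions being pairwise. Consequently a support $W$ is $c_Q$-visible precisely when $W \subseteq \Omega$ for some maximal absolute $c_Q$-visible set $\Omega \in \Gamma_Q(G)$, and the condition $S_{\overline{Q}} \subseteq \bigcup_{w \in \Omega} V(H_w)$ is equivalent to $\mathrm{supp}(S_{\overline{Q}}) \subseteq \Omega$. Hence, fixing $Q$, the admissible non-empty $S_{\overline{Q}}$ are exactly the non-empty members of $\bigcup_{\Omega \in \Gamma_Q(G)} \mathcal{A}_\Omega$, where $\mathcal{A}_\Omega$ denotes the family of all subsets of $\bigcup_{w \in \Omega} V(H_w)$.

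The counting is then a generating-function inclusion-exclusion. Each vertex of $S_{\overline{Q}}$ contributes a factor $x$ and $Q$ contributes $x^{|Q|}$, so I would compute $\sum x^{|S_{\overline{Q}}|}$ over the union $\bigcup_\Omega \mathcal{A}_\Omega$. Since $\bigcap_{\Omega \in \mathcal{J}} \mathcal{A}_\Omega$ consists of all subsets of $\bigcup_{w \in \bigcap_{W \in \mathcal{J}} W} V(H_w)$, a ground set on $|\bigcap_{W \in \mathcal{J}} W|\,|V(H)|$ vertices with generating function $(1+x)^{|\bigcap_{W \in \mathcal{J}} W|\,|V(H)|}$, inclusion-exclusion gives $\sum_{\emptyset \neq \mathcal{J} \subseteq \Gamma_Q(G)} (-1)^{|\mathcal{J}|+1} (1+x)^{|\bigcap_{W \in \mathcal{J}} W|\,|V(H)|}$ for the full union. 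Because the empty set lies in every $\mathcal{A}_\Omega$ and $\sum_{\emptyset \neq \mathcal{J}}(-1)^{|\mathcal{J}|+1} = 1$, removing the empty-set term can be distributed termwise as a $-1$ inside each summand, which yields exactly the ``Otherwise'' expression for $p_Q(x)$ after multiplying by $x^{|Q|}$. The disjoint-visible case follows at once: pairwise disjointness of the maximal sets forces $\bigcap_{W \in \mathcal{J}} W = \emptyset$ whenever $|\mathcal{J}| \geq 2$, annihilating all higher-order terms and collapsing the sum to $\sum_{\Omega \in \Gamma_Q(G)} \bigl((1+x)^{|\Omega||V(H)|} - 1\bigr) x^{|Q|}$. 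Summing $p_Q(x)$ over all proper mutual-visibility sets $Q$ of $G$ then finishes the proof.

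The main obstacle I anticipate lies in the first step: rigorously verifying that the geodesics between the relevant vertex types in $G \odot H$ are exactly as described, in particular that no shortcut through another copy of $H$ exists, so that each visibility condition on $S$ transfers cleanly to the corresponding $Q$-visibility condition on $G$ and the entire problem collapses onto the support $W$. Once this reduction and the termwise $-1$ identity are in hand, the inclusion-exclusion bookkeeping is routine.
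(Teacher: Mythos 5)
This paper does not prove Lemma~\ref{P2.lem12}; it is imported verbatim from \cite[Lem.~22]{VP_2}, so there is no in-paper proof to compare against line by line. Judged on its own merits, your reconstruction is correct and is surely the intended argument: the corona geodesic analysis is right (every geodesic between base vertices stays in $G$; a geodesic from $u\in Q$ to $s\in V(H_w)$ is a $G$-geodesic $u$--$w$ followed by the edge $ws$, with $w\notin S$ since $w\in\overline{Q}$; cross-copy geodesics pass through both attachment vertices; same-copy pairs are always $S$-visible via $w\notin S$), and it transfers the mutual-visibility of $S=Q\cup S_{\overline{Q}}$ exactly to the conditions that $Q$ be a mutual-visibility set of $G$ and that $\mathrm{supp}(S_{\overline{Q}})$ be an absolute $c_Q$-visible set. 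The heredity observation (both defining conditions of $c_Q$-visibility are pairwise, so every $c_Q$-visible set lies inside some member of $\Gamma_Q(G)$) correctly reduces the count to the union $\bigcup_{\Omega\in\Gamma_Q(G)}\mathcal{A}_\Omega$, your identification of $\bigcap_{\Omega\in\mathcal{J}}\mathcal{A}_\Omega$ with the power set of a ground set of size $\bigl|\bigcap_{W\in\mathcal{J}}W\bigr|\,|V(H)|$ is right, and the termwise distribution of the $-1$ is justified by $\sum_{\emptyset\neq\mathcal{J}\subseteq\Gamma_Q(G)}(-1)^{|\mathcal{J}|+1}=1-(1-1)^{|\Gamma_Q(G)|}=1$. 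The disjoint-visible branch then falls out as the collapse of the inclusion--exclusion, with no double counting since a nonempty support cannot lie in two disjoint maximal sets. The only caveat is definitional: ``disjoint-visible'' is not defined in this paper, and your reading (the members of $\Gamma_Q(G)$ are pairwise disjoint) is the one consistent with both branches of $p_Q(x)$ and with the usage in Lemma~\ref{P4.lem2}, where uniqueness of the maximal set is invoked to conclude disjoint-visibility.
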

\begin{lemma}[{\cite[Lem. 4]{VP_2}}]\label{P2.lem5}
  Let $A \subseteq V(G)$ where $|A| \geq 2$ and let $\ B\subseteq \cup_{w \in V(G) } V(H_w)$. If $B$ contains at least one vertex of $\cup_{a \in A } H_a $, then $A \cup B$ is not a mutual-visibility set of $G \odot H$.
\end{lemma}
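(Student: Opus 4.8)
The plan is to prove the statement directly by exhibiting a single pair of vertices in $A\cup B$ that fails to be $(A\cup B)$-visible, which immediately rules out $A\cup B$ being a mutual-visibility set. By hypothesis $B$ meets some copy $H_a$ with $a\in A$, so I would fix a vertex $z\in B\cap V(H_a)$. Since $|A|\ge 2$, I can pick a second base vertex $a'\in A$ with $a'\neq a$. My claim is that the pair $\{z,a'\}$ is the required obstruction.

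The key structural fact I would use is that, in the corona $G\odot H$, the base vertex $a$ is a cut vertex isolating the copy $H_a$: by the definition of corona, every vertex of $H_a$ is adjacent only to $a$ and to other vertices of $H_a$, so the only edges leaving $V(H_a)$ have $a$ as an endpoint. Hence every $(z,a')$-path — and in particular every geodesic — must pass through $a$. Because $z\in V(H_a)$ while $a,a'\in V(G)$, we have $z\neq a$ and $z\neq a'$, and $a\neq a'$ by choice; thus $a$ lies strictly between $z$ and $a'$ on each such path, i.e.\ $a$ is an internal vertex of every geodesic from $z$ to $a'$.

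To conclude, I would simply read off the memberships: $z\in B\subseteq A\cup B$ and $a'\in A\subseteq A\cup B$ supply two endpoints in $A\cup B$, while the forced internal vertex satisfies $a\in A\subseteq A\cup B$. Consequently no geodesic from $z$ to $a'$ avoids $A\cup B$ internally, so $\{z,a'\}$ is not $(A\cup B)$-visible, and therefore $A\cup B$ is not a mutual-visibility set.

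I do not expect a genuine obstacle here; the argument rests entirely on the cut-vertex property of coronas, which is immediate from the construction and does not even require the explicit distance formula for $G\odot H$. The only point deserving a line of care is confirming that the forced vertex $a$ is truly \emph{internal} rather than an endpoint, which is precisely why I isolated the conditions $z\neq a$, $z\neq a'$, and $a\neq a'$ above.
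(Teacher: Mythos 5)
Your proposal is correct and complete: the cut-vertex property of the corona (every vertex of $H_a$ reaches the rest of $G \odot H$ only through $a$) forces $a \in A$ to be an internal vertex of every $(z,a')$-geodesic, which kills $(A\cup B)$-visibility of the pair $\{z,a'\}$, and you correctly verify the endpoint conditions $z \neq a$, $z \neq a'$, $a \neq a'$. Note that this paper states the lemma only by citation to its companion paper without reproducing a proof, so there is no in-paper argument to compare against; your argument is the natural one and in fact proves the slightly stronger fact that \emph{every} $(z,a')$-path, not just every geodesic, is blocked.
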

\begin{theorem}
    The visibility polynomial of helm graph $H_n,\ n \geq 8$ is given by 
    $$\mathcal{V}(H_n)= \mathcal{V}(W_n)+((1+x)^{n-1}-1)+(n-1)x^2+\sum_Q w_Q(x)$$
    where, 
   $$w_Q(x) = \left\{
  \begin{array}{ll}
  \left((1+x)^{|\overline{Q}|-1}\right)x^{|Q|}& \text{if } h \notin Q \\[10pt]
   p_Q(x) & \text{Otherwise}
  \end{array}
\right.$$
\end{theorem}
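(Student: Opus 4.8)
The plan is to enumerate the mutual‑visibility sets of $H_n$ by recording, for each such set $S$, its trace on the base wheel and on the pendants. Writing $H_n=(W_n\odot K_1)\setminus e_h$, every pendant is a leaf, so deleting the hub‑pendant leaves all distances among the surviving vertices unchanged; hence $S\subseteq V(H_n)$ is mutual‑visible in $H_n$ precisely when it is mutual‑visible in $W_n\odot K_1$ and avoids the (now absent) hub‑pendant. I will therefore set $Q=S\cap V(W_n)$ and let $P$ be the chosen rim‑pendants, and sum $x^{|Q|+|P|}$ over all admissible pairs $(Q,P)$.

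First I dispose of the two pure cases. If $P=\emptyset$, then by Lemma~\ref{P2.lem1} the set $Q$ is mutual‑visible in $H_n$ iff it is mutual‑visible in $W_n$, contributing $\mathcal{V}(W_n)$ (with the empty set included). If $Q=\emptyset$, then by Lemma~\ref{P2.lem2} every set of pendants is mutual‑visible; discarding the empty set already counted above leaves the nonempty‑pendant contribution $(1+x)^{n-1}-1$. It then remains to treat the genuinely mixed sets, those with $Q\neq\emptyset\neq P$, and here the hub is decisive, so I split on whether $h\in Q$.

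If $h\notin Q$, then $Q\subseteq V(C_{n-1})$ and Lemma~\ref{P4.lem2} gives that $Q$ is disjoint‑visible with the unique maximal absolute $c_Q$‑visible set $\overline{Q}$. Since $h\notin S$, every base--pendant and pendant--pendant geodesic can be routed through the hub and therefore avoids $Q$, so \emph{every} nonempty choice of pendants attached to $\overline{Q}$ yields a mutual‑visibility set. As the hub lies in $\overline{Q}$ but carries no pendant, exactly $|\overline{Q}|-1$ pendants are available, and these nonempty choices contribute $\big((1+x)^{|\overline{Q}|-1}-1\big)x^{|Q|}$, the first case of $w_Q$. The only mixed sets not covered this way are a single rim vertex together with its \emph{own} pendant: the sets $\{v,v'\}$ are always mutual‑visible, while own‑pendants are forbidden once $|Q|\ge 2$ (Lemma~\ref{P2.lem5}) and the hub has no pendant, so these are exactly the $n-1$ sets giving the separate term $(n-1)x^2$. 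If instead $h\in Q$, the hub now lies in $S$ and can no longer be used to bypass $Q$, so the admissible pendant sets are exactly those compatible with the $c_Q$‑visibility structure of $W_n$; specialising Lemma~\ref{P2.lem12} with $|V(K_1)|=1$ gives the contribution $p_Q(x)$, the second case of $w_Q$. Summing $\mathcal{V}(W_n)$, $(1+x)^{n-1}-1$, the $(n-1)x^2$ from the own‑pendant pairs, and $\sum_Q w_Q(x)$ then yields the claimed identity.

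I expect the mixed case to be the main obstacle, and within it the bookkeeping forced by the missing hub‑pendant. When $h\notin Q$ one must verify that hub‑routing really renders every admissible pendant choice visible, so that the clean factor $(1+x)^{|\overline{Q}|-1}$ appears with the $-1$ removing the empty pendant set already counted in $\mathcal{V}(W_n)$; when $h\in Q$ the loss of hub‑routing reinstates the clustering constraints encoded in $p_Q$, so one must confirm that $\Omega_Q$ and $\Gamma_Q$ (computed in $W_n$) are exactly what governs the count. Finally, care is needed to attribute the pairs $\{v,v'\}$ to the separate $(n-1)x^2$ term rather than to $\sum_Q w_Q$, and to check that no set containing the non‑existent hub‑pendant is ever counted when the corona lemmas are transferred to $H_n$.
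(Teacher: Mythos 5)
Your decomposition is exactly the paper's: split each mutual-visibility set $S$ of $H_n$ into its trace $Q=S\cap V(W_n)$ and its pendant part, handle the two pure cases via Lemmas~\ref{P2.lem1} and~\ref{P2.lem2}, the own-pendant pairs $\{v,v'\}$ via case~1 of Lemma~\ref{P2.lem3} (your direct argument plus Lemma~\ref{P2.lem5} amounts to the same thing), and the mixed sets via Lemma~\ref{P2.lem12} combined with Lemma~\ref{P4.lem2}, splitting on whether $h\in Q$. In structure and in the lemmas invoked, this reproduces the paper's proof; your preliminary observation that deleting the leaf $e_h$-pendant leaves all relevant geodesics intact is the correct justification for transferring the corona lemmas to $H_n$, which the paper uses implicitly.

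The one substantive discrepancy is the $-1$ in the branch $h\notin Q$. You derive the contribution $\left((1+x)^{|\overline{Q}|-1}-1\right)x^{|Q|}$ and then assert that this \emph{is} the first case of $w_Q$ — but the theorem's displayed $w_Q$ is $\left((1+x)^{|\overline{Q}|-1}\right)x^{|Q|}$, without the $-1$. So, read literally, your argument does not prove the stated identity: it establishes a polynomial that differs from the theorem's by $\sum_{Q:\,h\notin Q}x^{|Q|}$. The mathematics, however, is on your side. Lemma~\ref{P2.lem12} requires $S_{\overline{Q}}\neq\emptyset$, and its disjoint-visible branch carries the $-1$ precisely because $S=Q$ with empty pendant part is already counted inside $\mathcal{V}(W_n)$; dropping it double-counts every proper mutual-visibility set $Q$ of $W_n$ avoiding $h$. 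A degree-one check confirms this: the coefficient of $x$ in the theorem's stated formula is $n+(n-1)+(n-1)=3n-2$, whereas $H_n$ has only $2n-1$ vertices and hence $2n-1$ mutual-visibility sets of size one; your corrected expression yields exactly $2n-1$. The paper's own proof commits the same omission as its statement (it writes the $h\notin Q$ contribution without the $-1$), so what you have found — apparently inadvertently — is an error in the theorem itself. You should state explicitly that your count contradicts the displayed $w_Q$ and that the first case of $w_Q$ should read $\left((1+x)^{|\overline{Q}|-1}-1\right)x^{|Q|}$, rather than claiming agreement with the formula as printed.
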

\begin{proof}
By definition, $H_n = (W_n \odot K_1)\setminus e_h$, where $e_h$ denotes the pendant edge incident with the hub $h$ in $W_n$. All mutual-visibility sets of $G \odot H$ are characterized in Lemmas~\ref{P2.lem1}, \ref{P2.lem2}, \ref{P2.lem3}, and \ref{P2.lem12}. Accordingly, we compute $\mathcal{V}(H_n)$ by considering five distinct cases. By Lemma~\ref{P2.lem1}, all mutual-visibility sets of $W_n$ are also mutual-visibility sets of $H_n$. Therefore, the contribution to $\mathcal{V}(H_n)$ in this case is $\mathcal{V}(W_n)$. By Lemma~\ref{P2.lem2}, all subsets of $\cup_{w \in V(W_n) \setminus \{h\}} V((K_1)_w)$ are also mutual-visibility sets of $H_n$; however, the empty subset is already counted in the previous case. Hence, the contribution in this case is $\left((1+x)^{n-1} - 1\right)$. Next, we compute the contribution corresponding to Case~1 of Lemma~\ref{P2.lem3}. Specifically, mutual-visibility sets of the form $\{v\} \cup B$, where $v \in V(W_n) \setminus \{h\}$ and $B$ is a mutual-visibility set of $K_1$, with diameter at most two, contribute $x(\mathcal{V}_{2}(K_1)-1)=x(1+x-1)$. Since there are $n-1$ such vertices $v$, the total contribution in this case is $(n-1)x^2$.

    Finally, Lemma~\ref{P2.lem12} accounts for the remaining contribution to $\mathcal{V}(H_n)$, which includes the mutual-visibility sets referred to in the second case of Lemma~\ref{P2.lem3}. Specifically, this corresponds to mutual-visibility sets of the form $S = Q \cup S_{\overline{Q}}$, where $\emptyset \neq Q \subsetneq V(W_n)$ and $\emptyset \neq S_{\overline{Q}} \subseteq \cup_{w \in \overline{Q}\setminus \{h\}} V((K_1)_w)$. By Lemma~\ref{P4.lem2}, if $h \notin Q$, then $\overline{Q}$ is the unique maximal absolute $c_Q$-visible set, which contains $h$. So the corresponding contribution to $\mathcal{V}(H_n)$ is $\left((1+x)^{|\overline{Q}|-1}\right)x^{|Q|}$. By Lemma~\ref{P2.lem12}, if $h \in Q$, then the corresponding contribution to $\mathcal{V}(H_n)$ is $p_Q(x)$, where 
    $$p_Q(x) = \left\{
  \begin{array}{ll}
  \sum_{\Omega_Q(G)}\left((1+x)^{|\Omega_Q(G)|}-1\right)x^{|Q|}& \text{if } Q \text{ is disjoint-visible} \\[10pt]
  \sum_{\emptyset \neq \mathcal{J} \subseteq \Gamma_Q(G)} (-1)^{|\mathcal{J}|+1} \left((1+x)^{\left| \cap_{W \in \mathcal{J}} W \right|}-1\right)x^{|Q|} & \text{Otherwise}
  \end{array}
\right.$$
    Note that, by Lemma~\ref{P2.lem5}, there is no mutual-visibility set of the form $S = V(G) \cup S_{\overline{Q}}$. This completes the proof.
\end{proof}

\begin{lemma}\label{P4.lem7}
Let \(F_n\) be the friendship graph with center \(c\) and 3-cycles 
\(T_i=(c,a_i,b_i)\) for \(i=1,\dots,n\). 
A set \(S\subseteq V(F_n)\) is a mutual-visibility set if and only if $S$ satisfies one of the following condition
\begin{enumerate}
\item \(c\notin S\)
\item \(c\in S\) and \(S\setminus\{c\}\subseteq\{a_i,b_i\}\) for some \(i\).
\end{enumerate}
\end{lemma}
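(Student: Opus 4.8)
The plan is to first pin down the complete distance and geodesic structure of $F_n$ and then read off both implications directly. Since every vertex is adjacent to the center, $d_{F_n}(c,w)=1$ for every $w\neq c$, and within each triangle $T_i$ the vertices $a_i$ and $b_i$ are adjacent. For two vertices drawn from distinct triangles — say $u\in\{a_i,b_i\}$ and $v\in\{a_j,b_j\}$ with $i\neq j$ — the only common neighbour is $c$, because the sole edges of $F_n$ not incident with $c$ are the $n$ triangle edges $a_ib_i$. Hence $d_{F_n}(u,v)=2$ and the path $(u,c,v)$ is the \emph{unique} geodesic joining them. In particular $diam(F_n)=2$, and $c$ is the only internal vertex that can occur on any geodesic between vertices lying in different triangles.

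For the sufficiency direction I would check the two stated cases separately. If $c\notin S$, then every vertex of $S$ lies in $\{a_i,b_i:1\le i\le n\}$, and any two such vertices $u,v$ are either adjacent (when $\{u,v\}=\{a_i,b_i\}$ for some $i$), so the edge $uv$ is a geodesic with no internal vertex, or they lie in distinct triangles, in which case the unique geodesic $(u,c,v)$ has its only internal vertex $c\notin S$; either way the pair is $S$-visible. If instead $c\in S$ and $S\setminus\{c\}\subseteq\{a_i,b_i\}$ for some $i$, then $S\subseteq V(T_i)$ induces a triangle, so every pair of vertices of $S$ is adjacent and hence trivially $S$-visible. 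Thus $S$ is a mutual-visibility set in both cases.

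For the necessity direction I would argue by contraposition. Assume $S$ is a mutual-visibility set for which condition (1) fails, so $c\in S$, and suppose condition (2) also fails. Then $S\setminus\{c\}$ contains vertices from two different triangles, say $u\in\{a_i,b_i\}$ and $v\in\{a_j,b_j\}$ with $i\neq j$. By the structural analysis above, $(u,c,v)$ is the unique $(u,v)$-geodesic, and its internal vertex $c$ lies in $S$; hence $\{u,v\}$ is not $S$-visible, contradicting mutual visibility. Therefore whenever $c\in S$ we must have $S\setminus\{c\}\subseteq\{a_i,b_i\}$ for a single $i$, which is exactly condition (2).

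The proof is elementary, so I do not expect a serious obstacle; the only point requiring genuine care is the uniqueness of the geodesic through $c$ for vertices in distinct triangles. This uniqueness is what makes $c$ an unavoidable obstruction and is the lynchpin of the necessity argument, so the main thing to verify carefully is that $a_i$ and $a_j$ (resp.\ $a_i$ and $b_j$, or $b_i$ and $b_j$) share no common neighbour other than $c$ — which follows immediately from the edge set of $F_n$ noted above.
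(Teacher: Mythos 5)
Your proposal is correct and follows essentially the same argument as the paper: the same case split on whether $c\in S$, the same use of the unique geodesic $(u,c,v)$ between vertices of distinct triangles for necessity, and the same adjacency checks for sufficiency. Your explicit verification that $c$ is the only common neighbour of vertices in distinct triangles (hence the geodesic is unique) is a worthwhile detail that the paper asserts without proof, but it does not change the route.
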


\begin{proof}
\noindent($\Rightarrow$) Suppose $S$ is a mutual-visibility set of $F_n$ with $c\in S$. 
If there exist $i\neq j$ with $u\in\{a_i,b_i\}\cap S$ and $v\in\{a_j,b_j\}\cap S$, then the unique geodesic between $u$ and $v$ is $(u,c,v)$, whose internal vertex $c$ lies in $S$, contradicting the mutual-visibility of $S$. Therefore, when $c\in S$, all vertices of $S\setminus\{c\}$ must lie within a single 3-cycle.

\noindent($\Leftarrow$) If $c\notin S$, then for any $u,v\in S$ either $u$ and $v$ are adjacent, or the unique geodesic is $(u,c,v)$ with internal vertex $c\notin S$. In both cases $u$ and $v$ are $S$-visible. If $c\in S$ and $S\setminus\{c\}\subseteq\{a_i,b_i\}$ for some $i$, then the only possible pairs to check are $(a_i,b_i)$, $(c,a_i)$ and  $(c,b_i)$. Since there are edges between the vertices of each of these pairs, $S$ is a mutual-visibility set. This completes the proof.
\end{proof}

\begin{theorem}
Let $F_n$ be the friendship graph with center $c$ and 3-cycles $T_i=(c,a_i,b_i)$ for $i=1,\dots,n$. Its visibility polynomial is $\mathcal{V}(F_n)= (1+x)^{2n} + x + 2n x^2 + n x^3$.
\end{theorem}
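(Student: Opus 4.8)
The plan is to invoke Lemma~\ref{P4.lem7} to partition all mutual-visibility sets of $F_n$ according to whether the center $c$ belongs to the set, and then to enumerate each class by size so as to read off the coefficients of $\mathcal{V}(F_n)=\sum_{S}x^{|S|}$, the sum ranging over all mutual-visibility sets $S$.

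First I would handle the sets with $c\notin S$. By part~(1) of Lemma~\ref{P4.lem7}, \emph{every} subset $S\subseteq\{a_1,b_1,\ldots,a_n,b_n\}$ is a mutual-visibility set, and there are exactly $2n$ such non-central vertices. Since a set of size $k$ contributes $x^{k}$ and there are $\binom{2n}{k}$ subsets of that size, the total contribution of this class is $\sum_{k=0}^{2n}\binom{2n}{k}x^{k}=(1+x)^{2n}$. Observe that this class already includes the empty set, which accounts for the constant term.

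Next I would count the sets with $c\in S$. By part~(2), these are precisely the sets $S=\{c\}\cup T$ with $T\subseteq\{a_i,b_i\}$ for some single index $i$, so I would stratify by $|T|\in\{0,1,2\}$. For $|T|=0$ one obtains the single set $\{c\}$ of size $1$; for $|T|=1$ one obtains the $2n$ sets $\{c,a_i\}$ and $\{c,b_i\}$ of size $2$; and for $|T|=2$ one obtains the $n$ sets $\{c,a_i,b_i\}$ of size $3$. These contribute $x+2nx^{2}+nx^{3}$.

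The only point requiring care — and the main place an overcount could slip in — concerns the second class. The set $\{c\}$ (the case $T=\emptyset$) is compatible with every index $i$, yet it is a single set and must be counted once, not $n$ times. The two classes are disjoint because membership of $c$ distinguishes them, so no set is counted twice across the partition. Summing the two contributions then yields $\mathcal{V}(F_n)=(1+x)^{2n}+x+2nx^{2}+nx^{3}$, completing the proof.
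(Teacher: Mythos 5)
Your proposal is correct and follows essentially the same route as the paper's own proof: both split by membership of $c$, use Lemma~\ref{P4.lem7} to identify the two classes, and enumerate them as $(1+x)^{2n}$ and $x+2nx^2+nx^3$ respectively. Your explicit remark that $\{c\}$ must be counted once rather than $n$ times is a small clarification the paper handles implicitly, but it does not change the argument.
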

\begin{proof}
Let $S$ be a mutual-visibility set of $F_n$. By Lemma~\ref{P4.lem7}, either 
$c\notin S$, or $c\in S$ and $S\setminus\{c\}\subseteq\{a_i,b_i\}$ for some $i$.

\medskip
\noindent\textbf{Case 1}: If $c\notin S$, then $S$ may be any subset of the 
$2n$ noncentral vertices. The contribution of these mutual-visibility sets to $\mathcal{V}(F_n)$ is given by $(1+x)^{2n}$.

\medskip
\noindent\textbf{Case 2}: If $c\in S$ and $S\setminus\{c\}\subseteq\{a_i,b_i\}$ 
for some $i$, then $S\setminus\{c\}$ can only be one of the following four subsets: $\emptyset$, $\{a_i\}$, $\{b_i\}$, or $\{a_i,b_i\}$. These yield sets of 
sizes $1,2,2,$ and $3$, respectively. The singleton $\{c\}$ contributes $x$ to $\mathcal{V}(F_n)$. For each $i$, there are two sets $\{c,a_i\},\{c,b_i\}$ of size 2 and one set $\{c,a_i,b_i\}$ of size 3 which correspond to the 3-cycle $T_i$. Altogether, the $n$ 3-cycles contribute $2n x^2 + n x^3$ to $\mathcal{V}(F_n)$. 

Adding the contributions from the two cases, we obtain $\mathcal{V}(F_n)= (1+x)^{2n} + x + 2n x^2 + n x^3$.
\end{proof}

\begin{lemma}\label{P4.lem8}
Let $S_n = P_{n-1} \vee K_1$ be the shell graph on $n \geq 3$ vertices, with apex $c$. For $S = \{c\} \cup A$, where $A \subseteq V(P_{n-1})$, the set $S$ is a mutual-visibility set of $S_n$ if and only if $|A| \leq 2$ and $diam(P_{n-1}[A]) \leq 2$.
\end{lemma}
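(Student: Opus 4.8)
The plan is to argue directly from the distance structure of $S_n$, in close parallel with the wheel case of Lemma~\ref{P4.lem1}, with the apex $c$ playing the role of the hub and the path $P_{n-1}$ playing the role of the rim. First I would fix a labelling $p_1,p_2,\dots,p_{n-1}$ of the path so that $p_i p_{i+1}\in E(P_{n-1})$, and record that since $c$ is adjacent to every $p_i$, the path $(p_i,c,p_j)$ has length two; hence
\[
d_{S_n}(p_i,p_j)=\min\{2,\,d_{P_{n-1}}(p_i,p_j)\}=\min\{2,\,|i-j|\},
\qquad d_{S_n}(c,p_i)=1 .
\]
The one structural fact driving everything is the enumeration of geodesics: a pair $\{p_i,p_j\}$ at path-distance $1$ is joined by a single edge (no internal vertex); a pair at path-distance exactly $2$, say $j=i+2$, has precisely two geodesics, namely $(p_i,p_{i+1},p_j)$ along the path and $(p_i,c,p_j)$ through the apex; and a pair at path-distance at least $3$ has the unique geodesic $(p_i,c,p_j)$, since no path vertex is adjacent to both endpoints. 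Throughout I interpret $diam(P_{n-1}[A])$ as $\max_{u,v\in A} d_{P_{n-1}}(u,v)$.

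For the sufficiency direction I would assume $|A|\le 2$ and that the at most one pair in $A$ has path-distance at most $2$, and verify $S$-visibility of every pair of $S=\{c\}\cup A$. Pairs of the form $\{c,p_i\}$ are adjacent, hence visible. The only remaining pair is the single pair $\{p_i,p_j\}$ in $A$ when $|A|=2$: if these are adjacent this is immediate, and if their path-distance equals $2$ then, because $|A|=2$, the intermediate vertex $p_{i+1}$ is not in $A$, so the geodesic $(p_i,p_{i+1},p_j)$ avoids $S$ internally. Thus $S$ is a mutual-visibility set.

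For the necessity direction I would assume $S$ is a mutual-visibility set and extract the two conditions in turn. For the distance bound, if some $p_i,p_j\in A$ had path-distance at least $3$, then by the geodesic enumeration the unique geodesic between them is $(p_i,c,p_j)$, whose internal vertex $c$ lies in $S$, so $\{p_i,p_j\}$ would fail to be $S$-visible, a contradiction; hence every pair of $A$ has path-distance at most $2$. For the cardinality bound, suppose $|A|\ge 3$ and write the indices as $i_1<i_2<\dots<i_m$ with $m\ge 3$; the distance bound forces $i_m-i_1\le 2$, while $m\ge 3$ forces $i_m-i_1\ge 2$, so $m=3$ and $A=\{p_{i_1},p_{i_1+1},p_{i_1+2}\}$ consists of three consecutive vertices. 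Then the pair $\{p_{i_1},p_{i_1+2}\}$ is at distance $2$, but both of its geodesics are blocked: the path geodesic by the internal vertex $p_{i_1+1}\in A\subseteq S$, and the apex geodesic by $c\in S$. So it is not $S$-visible, a contradiction, forcing $|A|\le 2$.

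I do not expect a genuine obstacle here; the single delicate point is the distance-$2$ case, where two distinct geodesics compete, so one must check that at least one of them is unobstructed (in sufficiency) or that both are simultaneously blocked (in necessity). The remainder is a bounded case analysis that the distance formula reduces to a short combinatorial count on the indices.
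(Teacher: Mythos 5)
Your proposal is correct and follows essentially the same route as the paper: the same distance formula and geodesic enumeration in $S_n$, the same sufficiency check, and the same two obstructions for necessity (pairs at path-distance $\geq 3$ blocked by $c$, and three consecutive path vertices having both geodesics blocked), merely organized in the reverse order. Your explicit reading of $diam(P_{n-1}[A])$ as $\max_{u,v\in A} d_{P_{n-1}}(u,v)$ matches the paper's intended (if slightly loose) use of that notation.
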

\begin{proof}
Let $P_{n-1}= (v_1, v_2, \ldots, v_{n-1})$. Since, $c$ is adjacent to every $v_i$, distances in $S_n$ satisfy:
\[
d_{S_n}(v_i,v_j)=
\begin{cases}
1 \quad \text{if }&|i-j|=1\\
2 \quad \text{if }&|i-j|\geq 2
\end{cases}
\quad \text{and} \quad
d(c,v_i)=1.
\]
\smallskip
\noindent($\Rightarrow$) Assume that $S=\{c\}\cup A$ is a mutual-visibility set of $S_n$. We first claim that $A$ cannot contain three consecutive vertices from $P_{n-1}$. Indeed, if $v_{i-1},v_i,v_{i+1}\in A$, then for the pair $\{v_{i-1},v_{i+1}\}$ the two geodesics 
$(v_{i-1},c,v_{i+1})$ and $(v_{i-1},v_i,v_{i+1})$ both have their unique internal vertex in $S$ 
(namely $c$ and $v_i$, respectively), contradicting the mutual-visibility of $S$. 

Next, consider a pair $\{v_i,v_j\}$ with $|i-j|\ge 3$. 
The unique geodesic between them in $S_n$ is $(v_i,c,v_j)$, whose internal vertex $c$ lies in $S$. 
Thus such pairs of vertices cannot belong to $A$. These two restrictions imply that $|A|\le 2$, and any pair in $A$ has distance at most $2$ in $P_{n-1}$. 
Equivalently, $|A|\le 2$ and $diam(P_{n-1}[A])\le 2$.

\smallskip
\noindent($\Leftarrow$)Conversely, suppose that $|A|\le 2$ and $diam(P_{n-1}[A]) \le 2$. We will prove that $S=\{c\}\cup A$ is $S$-visible. Pairs of the form $(c,v_i)$ with $v_i\in A$ are adjacent, and hence $S$-visible. If $A=\emptyset$ or $|A|=1$, there is nothing further to check. If $|A|=2$, say $A=\{v_i,v_j\}$, then $1\le |i-j|\le 2$ by the diameter condition on $A$. If $|i-j|=1$, the edge $v_iv_j$ is a geodesic with no internal vertices. 
If $|i-j|=2$, then $v_i$ and $v_j$ are joined by exactly two geodesics of length $2$, namely $(v_i,c,v_j)$ and $(v_i,v_{i+1},v_j)$. Since $|A|=2$, the middle vertex $v_{i+1}\notin S$; hence the internal vertex of the latter geodesic does not belong to $S$. Thus, in every case, a shortest $(u,v)$-path with all internal vertices outside $S$ exists for each pair $u,v\in S$. Therefore, $S$ is a mutual-visibility set of $S_n$.
\end{proof}
\begin{theorem}\label{P4.th3}
Let $S_n$ be the shell graph on $n\geq 3$ vertices.  
Then the visibility polynomial of $S_n$ is given by
\[
\mathcal{V}(S_n) = (1+x)^{n-1} + x + (n-1)x^2 + (2n-5)x^3.
\]
\end{theorem}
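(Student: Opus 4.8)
The plan is to partition the mutual-visibility sets of $S_n$ according to whether or not they contain the apex $c$, and to read off each block's contribution to $\mathcal{V}(S_n)$ as a monomial or a generating polynomial in $x$. Recall that $\mathcal{V}(S_n)=\sum_k m_k x^k$, where $m_k$ counts the mutual-visibility sets of cardinality $k$ (with the empty set supplying the constant term $1$). So the whole argument reduces to enumerating the mutual-visibility sets by size within each block.

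First I would handle the sets $S$ with $c\notin S$, so that $S\subseteq V(P_{n-1})$. Using the distance formula established in the proof of Lemma~\ref{P4.lem8}, any two path vertices $v_i,v_j$ are either adjacent or satisfy $d_{S_n}(v_i,v_j)=2$ with $(v_i,c,v_j)$ a geodesic; since $c\notin S$, this geodesic has no internal vertex in $S$. Hence every pair in $S$ is $S$-visible, so \emph{every} subset of $V(P_{n-1})$ is a mutual-visibility set. As $|V(P_{n-1})|=n-1$, this block contributes exactly $(1+x)^{n-1}$, which also accounts for the empty set.

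Next I would treat the case $c\in S$, writing $S=\{c\}\cup A$ with $A\subseteq V(P_{n-1})$, and invoke Lemma~\ref{P4.lem8}: such an $S$ is a mutual-visibility set if and only if $|A|\le 2$ and every pair in $A$ is at path-distance at most $2$. I then split by $|A|$. For $|A|=0$ the single set $\{c\}$ contributes $x$; for $|A|=1$ there are $n-1$ choices of $v_i$, contributing $(n-1)x^2$. The only subcase needing genuine counting is $|A|=2$: writing $A=\{v_i,v_j\}$, the admissibility condition is $|i-j|\in\{1,2\}$, and along $P_{n-1}$ there are $n-2$ adjacent pairs together with $n-3$ pairs at distance exactly $2$, for a total of $2n-5$. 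Hence this block contributes $(2n-5)x^3$.

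Adding the two cases yields $\mathcal{V}(S_n)=(1+x)^{n-1}+x+(n-1)x^2+(2n-5)x^3$. The only step that requires care is the enumeration in the $|A|=2$ subcase, in particular reading $diam(P_{n-1}[A])\le 2$ as a bound on the distance measured in $P_{n-1}$ (rather than in the induced subgraph, which is disconnected when $|i-j|=2$) and correctly tallying the $n-2$ and $n-3$ pairs. A quick sanity check at $n=3$, where $S_3=K_3$ and the formula must collapse to $(1+x)^3$, confirms the count and guards against off-by-one errors in the small cases.
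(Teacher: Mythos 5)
Your proposal is correct and follows essentially the same route as the paper: the same partition by membership of the apex $c$, the same geodesic argument showing every subset of $V(P_{n-1})$ is a mutual-visibility set, and the same invocation of Lemma~\ref{P4.lem8} with the identical $(n-2)+(n-3)=2n-5$ pair count. Your added remarks---reading $diam(P_{n-1}[A])$ as path-distance in $P_{n-1}$ rather than in the (possibly disconnected) induced subgraph, and the sanity check $\mathcal{V}(S_3)=(1+x)^3$---are sensible clarifications but do not change the argument.
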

\begin{proof}
Note that $S_n = P_{n-1}\vee K_1$, where $P_{n-1}= (v_1, v_2, \ldots, v_{n-1})$. Label the apex by $c$. Let $S$ be a subset of $V(S_n)$ and  suppose that $c\notin S$. If $v_i$ and $v_j$ are not adjacent ($|i-j|\geq 2$) then, $(v_i, c, v_j)$ is a geodesic from $v_i$ to $v_j$, whose internal vertex $c\notin S$. Hence, every subset of $V(P_{n-1})$ is a mutual-visibility set of $S_n$. Therefore, the contribution of such mutual-visibility sets to $\mathcal{V}(S_n)$ is $\sum_{k=0}^{n-1}\binom{n-1}{k}x^k=(1+x)^{n-1}$.

Now suppose $c\in S$ and write $S=\{c\}\cup A$ with $A\subseteq V(P_{n-1})$. By Lemma~\ref{P4.lem8}, $S$ is a mutual-visibility set of $S_n$ if and only if $|A|\le 2$ and $diam(P_{n-1}[A])\le 2$. Hence, the admissible choices for $A$ are:
\begin{itemize}
\item $|A|=0$: $A=\emptyset$, giving one choice and contributing $x$;
\item $|A|=1$: $A$ contains exactly one vertex of $P_{n-1}$, yielding $n-1$ choices and contributing $(n-1)x^2$;
\item $|A|=2$: $A$ contains two vertices of $P_{n-1}$ at distance at most $2$. 
There are $(n-2)$ adjacent pairs $\{v_i,v_{i+1}\}$ and $(n-3)$ pairs at a distance $2$, namely $\{v_i,v_{i+2}\}$, 
for a total of $(2n-5)$ choices, contributing $(2n-5)x^3$.
\end{itemize}

Therefore, the total contribution from sets with $c\in S$ is $x+(n-1)x^2+(2n-5)x^3$. Combining this with the contribution from sets with $c\notin S$, namely $(1+x)^{n-1}$, we obtain $\mathcal{V}(S_n)=(1+x)^{n-1}+x+(n-1)x^2+(2n-5)x^3$.
\end{proof}

\begin{lemma}[{\cite[Lem. 7]{VP_2}}]\label{P2.lem7}
 Let $g$ be a set-separator with respect to the disjoint sets $A$ and $B$ of $V(G)$. If $S$ is a mutual-visibility set of $G$, containing $g$, then either $S \cap A =\emptyset$ or $S \cap B =\emptyset$. That is, either $S\subseteq \overline{A}$ or $S\subseteq \overline{B}$.
\end{lemma}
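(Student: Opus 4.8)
The plan is to argue by contradiction, playing the defining property of a mutual-visibility set off against the defining property of a set-separator. First I would suppose, toward a contradiction, that \emph{both} $S\cap A\neq\emptyset$ and $S\cap B\neq\emptyset$, and fix vertices $u\in S\cap A$ and $v\in S\cap B$. Since $A$ and $B$ are disjoint we have $u\neq v$, and both lie in $S$; hence, because $S$ is a mutual-visibility set, the pair $\{u,v\}$ must be $S$-visible, i.e.\ there is a shortest $(u,v)$-path whose internal vertices all avoid $S$.

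The key preliminary step is to confirm that $g$ is a genuine \emph{internal} obstruction on every such geodesic, which amounts to checking $g\notin\{u,v\}$. By definition $g$ is a set-separator for $A$ and $B$ only if it is a shortest-separator for \emph{every} pair $(a,b)$ with $a\in A$ and $b\in B$, and the definition of shortest-separator explicitly excludes the endpoints, requiring $g\in V(G)\setminus\{a,b\}$. In particular, were $g\in A$ (resp.\ $g\in B$), the choice $a=g$ (resp.\ $b=g$) would violate the endpoint exclusion; therefore $g\notin A$ and $g\notin B$, so in particular $g\neq u$ and $g\neq v$.

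I would then invoke the set-separator property for this specific pair $u\in A$, $v\in B$: every shortest $(u,v)$-path contains $g$. Combined with $g\notin\{u,v\}$, this says that $g$ is an internal vertex of \emph{every} $(u,v)$-geodesic. Since $g\in S$ by hypothesis, no $(u,v)$-geodesic can have all of its internal vertices outside $S$, so $\{u,v\}$ is not $S$-visible — directly contradicting the conclusion of the first paragraph. This contradiction forces at most one of $S\cap A$, $S\cap B$ to be nonempty, which is precisely the assertion that $S\subseteq\overline{A}$ or $S\subseteq\overline{B}$.

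The whole argument is short and follows almost mechanically from unwinding the two definitions; the only place requiring any care is the middle step, namely the observation $g\notin A\cup B$, which guarantees that $g$ is truly an internal vertex of the geodesics under consideration (and not accidentally one of the chosen endpoints $u$ or $v$). Once that is secured, the clash between ``$S$ is a mutual-visibility set'' and ``$g\in S$ lies on every $(u,v)$-geodesic'' is immediate.
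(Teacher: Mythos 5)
Your proof is correct: for the chosen pair $u\in S\cap A$, $v\in S\cap B$, the set-separator hypothesis makes $g$ a shortest-separator with respect to $u$ and $v$, hence $g\in V(G)\setminus\{u,v\}$ lies internally on every $(u,v)$-geodesic, and since $g\in S$ no geodesic can witness the $S$-visibility that mutual visibility of $S$ demands — exactly the clash you describe. Note that this paper states the lemma without proof, importing it from \cite[Lem.~7]{VP_2}, so there is no in-paper argument to compare against; your route is the natural one, and your detour establishing $g\notin A\cup B$ (which tacitly uses $A\neq\emptyset$ and $B\neq\emptyset$, valid under your contradiction hypothesis) is slightly more than needed, since applying the definition of shortest-separator directly to the specific pair $(u,v)$ already gives $g\notin\{u,v\}$.
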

\begin{theorem}\label{P4.th4}
Let $B_{m,n}$ be the bow graph obtained as the union of two shell graphs $S_m$ and $S_n$ sharing the same apex $c$.  
Then the visibility polynomial of $B_{m,n}$ is given by
\[
\mathcal{V}(B_{m,n}) = (1+x)^{m+n-2} + x + (m+n-2)x^2 + (2m+2n-10)x^3.
\]
\end{theorem}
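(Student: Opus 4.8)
The plan is to mirror the structure of the shell-graph argument in Theorem~\ref{P4.th3}, partitioning the mutual-visibility sets of $B_{m,n}$ according to whether the apex $c$ belongs to them. First I would record the distances in $B_{m,n}$: since $c$ is adjacent to every non-apex vertex, $d(c,v)=1$ for each such $v$, while for two distinct non-apex vertices $u,v$ one has $d(u,v)=1$ when they are consecutive on a common path and $d(u,v)=2$ otherwise, because $(u,c,v)$ always furnishes a geodesic of length two.

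For the case $c\notin S$ I would argue exactly as in the shell graph: every $S\subseteq V(P_{m-1})\cup V(P_{n-1})$ is a mutual-visibility set, since any nonadjacent pair $u,v\in S$ is joined by the geodesic $(u,c,v)$ whose internal vertex $c$ lies outside $S$. As there are $m+n-2$ non-apex vertices, this case contributes $(1+x)^{m+n-2}$.

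The main work is the case $c\in S$, say $S=\{c\}\cup A$ with $A$ a set of non-apex vertices. Here the key structural observation is that $c$ is a set-separator between the two paths: for every $u\in V(P_{m-1})$ and $v\in V(P_{n-1})$ the unique geodesic is $(u,c,v)$, so $c$ is a shortest-separator for each such pair. Invoking Lemma~\ref{P2.lem7} with the disjoint sets $V(P_{m-1})$ and $V(P_{n-1})$, any mutual-visibility set containing $c$ must keep all its non-apex vertices inside a single path; that is, $A\subseteq V(P_{m-1})$ or $A\subseteq V(P_{n-1})$. It then suffices to treat these two subcases separately. In each subcase the geodesics among the vertices of $S$ remain within the corresponding shell, so the distances and obstructions coincide with those in $S_m$ (resp.\ $S_n$), and Lemma~\ref{P4.lem8} applies verbatim: $\{c\}\cup A$ is a mutual-visibility set if and only if $|A|\le 2$ and $\mathrm{diam}(P_{m-1}[A])\le 2$ (resp.\ for $P_{n-1}$).

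It remains to count. Reusing the enumeration of Theorem~\ref{P4.th3} for each path, the admissible sets with $A\subseteq V(P_{m-1})$ contribute $x+(m-1)x^2+(2m-5)x^3$, and those with $A\subseteq V(P_{n-1})$ contribute $x+(n-1)x^2+(2n-5)x^3$. The only set common to both enumerations is the singleton $\{c\}$ (the choice $A=\emptyset$), which must be counted once rather than twice; after removing this duplicate, the sets with $c\in S$ contribute $x+(m+n-2)x^2+(2m+2n-10)x^3$. Adding the $c\notin S$ contribution yields the stated formula. The one subtlety I expect to need care with is precisely this double-counting of $\{c\}$, together with confirming that embedding a single shell inside $B_{m,n}$ leaves the relevant distances---and hence the hypotheses of Lemma~\ref{P4.lem8}---unchanged.
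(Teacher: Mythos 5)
Your proposal is correct and follows essentially the same route as the paper's proof: the same split on whether $c\in S$, the same use of Lemma~\ref{P2.lem7} via the set-separator property of $c$, the same reuse of the shell-graph enumeration from Theorem~\ref{P4.th3}, and the same once-only counting of the duplicated singleton $\{c\}$. Your extra remark that geodesics among $\{c\}\cup A$ stay within a single shell (so Lemma~\ref{P4.lem8} applies unchanged) is a valid point that the paper passes over silently, but it is a refinement of, not a departure from, the paper's argument.
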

\begin{proof}
By definition, $B_{m,n} = K_1 \vee \bigl(P_{m-1} \cup P_{n-1}\bigr)$, where $K_1=\{c\}$ is the apex and  
$P_{m-1}=(a_1,\ldots,a_{m-1})$, $P_{n-1}=(b_1,\ldots,b_{n-1})$ are disjoint paths. Let $S \subseteq V(B_{m,n})$. To compute the number of mutual-visibility sets of $B_{m,n}$, we consider two cases.

\smallskip
\noindent\textbf{Case 1}: Suppose $c\notin S$. For any $u,v\in V(P_{m-1})$ (or both in $V(P_{n-1})$), either $uv$ is an edge of the path, or a shortest $(u,v)$-path is $(u,c,v)$ of length $2$, whose internal vertex $c\notin S$. For any $u\in V(P_{m-1})$ and $v\in V(P_{n-1})$, the unique geodesic between $u$ and $v$ is $(u,c,v)$, again with internal vertex $c\notin S$. Hence every subset of $V(P_{m-1})\cup V(P_{n-1})$ is a mutual-visibility set. The contribution of such sets to $\mathcal{V}(B_{m,n})$ is
\[
\sum_{k=0}^{m+n-2}\binom{m+n-2}{k}x^k=(1+x)^{m+n-2}.
\]

\smallskip
\noindent\textbf{Case 2}: Suppose $c\in S$. 
Here $c$ is a set-separator with respect to $V(P_{m-1})$ and $V(P_{n-1})$. By Lemma~\ref{P2.lem7}, if $S$ is a mutual-visibility set, then $S$ cannot meet both $V(P_{m-1})$ and $V(P_{n-1})$. Hence either $S=\{c\}\cup A$ with $A\subseteq V(P_{m-1})$, or $S=\{c\}\cup B$ with $B\subseteq V(P_{n-1})$. From the argument in the proof of Theorem~\ref{P4.th3}, the contribution of mutual-visibility sets of the form $S=\{c\}\cup A$ is $x+(m-1)x^2+(2m-5)x^3$. Similarly, the contribution of those of the form $S=\{c\}\cup B$ is $x+(n-1)x^2+(2n-5)x^3$. Since the singleton $\{c\}$ is counted in both cases, we include it only once. 
Hence the total contribution to $\mathcal{V}(B_{m,n})$ with $c\in S$ is
\[
x+(m-1)x^2+(2m-5)x^3 + (n-1)x^2+(2n-5)x^3 
= x+(m+n-2)x^2+(2m+2n-10)x^3.
\]
Combining with Case 1 gives the stated formula.
\end{proof}
\section{Conclusion}
In the paper, closed-form expressions for the visibility polynomials of the wheel, helm, friendship, shell, and bow graphs are presented. These results provide explicit characterizations of mutual-visibility sets in fundamental graph classes, offering both structural insights and computational advantages. The findings broaden the scope of visibility-based invariants and lay the groundwork for further exploration in more complex networks and applications.
\bibliographystyle{plainurl}
\bibliography{cas-refs}
\end{document}